\newcounter{minutes}\setcounter{minutes}{\time}
\newcounter{hours}\setcounter{hours}{\time}
\title[Tur\'an type inequalities for generalized inverse trigonometric functions]{Tur\'an type inequalities for generalized inverse trigonometric functions}
\author[\'A. Baricz]{\'Arp\'ad Baricz$^{\star}$}
\address{Department of Economics, Babe\c{s}-Bolyai University, 400591 Cluj-Napoca, Romania}
\address{Institute of Applied Mathematics, John von Neumann Faculty of Informatics, \'Obuda University, 1034 Budapest, Hungary}
\email{bariczocsi@yahoo.com}
\author[B.A. Bhayo]{Barkat Ali Bhayo}
\address{Department of Mathematical Information Technology, University of Jyv\"askyl\"a, 40014 Jyv\"askyl\"a, Finland} \email{bhayo.barkat@gmail.com}
\author[M. Vuorinen]{Matti Vuorinen}
\address{Department of Mathematics and Statistics, University of Turku, 20014 Turku, Finland} \email{vuorinen@utu.fi}
\thanks{$^{\star}$The work of \'A. Baricz was supported by a research grant of the Babe\c{s}-Bolyai University for young researchers.}
\newtheorem{lemma}{Lemma}
\newtheorem{theorem}{Theorem}
\newtheorem{corollary}{Corollary}
\newtheorem{conjecture}{Conjecture}
\keywords{Eigenfunctions of $p$-Laplacian, generalized trigonometric function, log-convexity, log-concavity, Tur\'an-type inequalities, completely monotone functions, Bernstein functions.} \subjclass[2010]{33C99, 33B99}
\begin{document}

\def\thefootnote{}
\footnotetext{ \texttt{File:~\jobname .tex,
          printed: \number\year-\number\month-\number\day,
          \thehours.\ifnum\theminutes<10{0}\fi\theminutes}
} \makeatletter\def\thefootnote{\@arabic\c@footnote}\makeatother

\maketitle


\begin{abstract}
In this paper we study the inverse of the eigenfunction $\sin_p$ of the one-dimensional
$p$-Laplace operator and its dependence on the parameter $p$, and we present a Tur\'an type
inequality for this function. Similar inequalities are given also
for other generalized inverse trigonometric and hyperbolic functions. In particular,
we deduce a Tur\'an type inequality for a series considered by Ramanujan, involving the digamma function.
\end{abstract}


\section{Introduction}

P. Lindqvist \cite{lp} studied the eigenfunction $\sin_p$ in connection with unidimensional nonlinear Dirichlet eigenvalue problem
for $p$-Laplacian. This function has become a standard tool in the analysis of more complicated equations with various applications e.g, see
\cite{benn,berg,bind,del,pem,eber,lpe}.

Motivated by the work of Lindqvist, several authors have studies on the equalities and inequalities of
the generalized trigonometric functions e.g, see
\cite{bhayo,bv2,be,egl,jiang,take,wang} and their bibliography.
Motivated by the many results on these generalized trigonometric functions, in this paper we make a contribution to the subject by showing some convexity properties \cite{avv2,baricz} and Tur\'an type inequalities for the inverse generalized trigonometric functions. These kind of inequalities are named after the Hungarian mathematician Paul Tur\'an who proved a similar inequality for Legendre polynomials. For more details on Tur\'an type inequalities we refer to the papers on hypergeometric functions \cite{turan1,turan2,turan3,turan4} and to the references therein. We deduce also a Tur\'an type inequality for a series involving the digamma function, which was considered by Ramanujan \cite{raman}. The monotonicity of the function $\pi_{p,q}$ is given in \cite{egl}, here we prove that the function $\pi_{p,q}$ is strictly geometrically convex and log-convex. We note that this study gives us new bounds for elementary functions in terms of generalized trigonometric and hyperbolic functions. We also mention that the results of this paper complements
the known Tur\'an type inequalities for Gaussian hypergeometric functions, see for example \cite{turan2,turan4}.

For the formulation of our main results we give first the following definitions of some classical functions, such as
\emph{gamma function} $\Gamma$, the $\emph
{ psi function}$ $\psi$ and the \emph{beta function} $B(\cdot,\cdot)$.
For $x>0$, $y>0$, these functions are defined by
$$\Gamma(x)=\int^\infty_0 e^{-t}t^{x-1}\,dt,\,\,\psi(x)=\frac{\Gamma'(x)}{\Gamma(x)},\,\,
B(x,y)=\frac{\Gamma(x)\Gamma(y)}{\Gamma(x+y)}.$$

For the given complex numbers $a,b$ and $c$ with $c\neq0,-1,-2,\ldots$,
the \emph{Gaussian hypergeometric function} is the
analytic continuation to the slit place $\mathbf{C}\setminus[1,\infty)$ of the series
$$F(a,b;c;z)={}_2F_1(a,b;c;z)=\sum^\infty_{n=0}\frac{(a,n)(b,n)}
{(c,n)}\frac{z^n}{n!},\qquad |z|<1.$$
Here $(a,0)=1$ for $a\neq 0$, and $(a,n)$ for $n\in\mathbb{N}$ is the \emph{shifted factorial}
 or the \emph{Appell symbol}
$$(a,n)=a(a+1)(a+2)\cdots(a+n-1).$$

For $x\in (0,1)$ and $p>0$ the generalized inverse trigonometric functions are defined as follows
$$\arcsin_p(x)=\int^x_0(1-t^p)^{-1/p}dt=x\,F\left(\frac{1}{p},\frac{1}{p};1+\frac{1}{p};x^p\right),$$
$$\arctan_p(x)=\int^x_0(1+t^p)^{-1}dt=x F\left(1,\frac{1}{p};1+\frac{1}{p};-x^p\right),$$
$${\rm arcsinh}_p(x)=\int^x_0(1+t^p)^{-1/p}dt=
xF\left(\frac{1}{p}\,,\frac{1}{p};1+\frac{1}{p};-x^p\right),$$
$${\rm arctanh}_p(x)=\int^x_0(1-t^p)^{-1}dt=xF\left
(1\,,\frac{1}{p};1+\frac{1}{p};x^p\right),$$
and $\arccos_p(x)=\arcsin_p((1-x^p)^{1/p}).$
We note that the eigenvalue problem \cite{dm}, $1<p<\infty$
$$-\Delta_p u=-\left(|u'|^{p-2}u'\right)'
=\lambda|u|^{p-2}u,\,u(0)=u(1)=0,$$
has eigenvalues
$\lambda_n=(p-1)(n \pi_p)^p,\,$
and eigenfunctions
$t\mapsto \sin_p(n \pi_p\, t),\,n\in\mathbb{N},\,$
where $\sin_p$ is the inverse function of $\arcsin_p$  and
$$\pi_p=\frac{2}{p}\int^1_0(1-s)^{-1/p}s^{1/p-1}ds=\frac{2}
{p}\,B\left(1-\frac{1}{p},\frac{1}{p}\right)=\frac{2 \pi}{p\,\sin\left(\frac{\pi}{p}\right)}\,,$$
see \cite{pem}. The other generalized trigonometric and hyperbolic functions $\cos_p:(0,a_p)\to(0,1),$ $\tan_p:(0,b_p)\to (0,1),$ $\sinh_p:(0,\infty)\to (0,1),$ and $\tanh_p:(0,\infty)\to (0,1)$ are defined as the inverse of the generalized inverse trigonometric and hyperbolic functions $\arccos_p,$ $\arctan_p,$ ${\rm arcsinh}_p$ and ${\rm arctanh}_p,$
where
$$a_p=\frac{\pi_p}{2},
\,b_p=\frac{1}{2p}\left(\psi\left(\frac{1+p}{2p}\right)
-\psi\left(\frac{1}{2p}\right)\right)=2^{-1/p}
F\left(\frac{1}{p},\frac{1}{p};1+\frac{1}{p};\frac{1}{2}\right).$$
We also consider for $p,q>0$ and $x\in(0,1)$ the generalized inverse trigonometric functions
$$\arcsin_{p,q}(x)=\int^x_0(1-t^q)^{-1/p}dt=x\,F\left(\frac{1}{p},\frac{1}{q};1+\frac{1}{q};x^q\right),$$
$${\rm arcsinh}_{p,q}(x)=\int^x_0(1+t^q)^{-1/p}dt=xF\left(\frac{1}{p}\,,\frac{1}{q};1+\frac{1}{q};-x^q\right),$$
which for $p=q$ reduces to $\arcsin_p(x)$ and ${\rm arcsinh}_p(x),$ also we denote $\arcsin_{p,q}(1)=\pi_{p,q}/2$.

Before we present the main results of this paper we recall some definitions, which will be used in the sequel. A function $f \colon (0,\infty)\to(0,\infty)$ is said to be logarithmically convex, or simply log-convex, if its natural logarithm $\ln f$ is convex, that is, for all $x,y>0$ and $\lambda\in[0,1]$ we have
   $$f(\lambda x+(1-\lambda)y) \leq \left[f(x)\right]^{\lambda}\left[f(y)\right]^{1-\lambda}.$$
The function $f$ is log-concave if the above inequality is reversed.  By definition, a function $g \colon (0,\infty)\rightarrow(0,\infty)$ is said to be geometrically (or multiplicatively)
convex if it is convex with respect to the geometric mean, that is, if for all $x,y>0$ and all $\lambda\in[0,1]$ the inequality
   $$g(x^{\lambda}y^{1-\lambda}) \leq[g(x)]^{\lambda}[g(y)]^{1-\lambda}$$
holds. The function $g$ is called geometrically concave if the above inequality is reversed. Observe that the
geometrical convexity of a function $g$ means that the function $\ln g$ is a convex function of $\ln x$ in
the usual sense. We also note that the differentiable function $f$ is log-convex (log-concave) if and only if
$x \mapsto f'(x)/f(x)$ is increasing (decreasing), while the differentiable function $g$ is geometrically convex (concave) if
and only if the function $x \mapsto xg'(x)/g(x)$ is increasing (decreasing), for more details see
\cite{baricz}. Finally, we recall that a function $h \colon (0,\infty)\rightarrow\mathbb{R}$ is said to be completely monotonic if $h$ has derivatives of all orders and satisfies $$(-1)^mh^{(m)}(x)\geq 0$$ for all $x>0$ and $m\in\{0,1,2,\dots\}.$ For properties of completely monotonic functions we refer to the paper \cite{samko} and to the references therein.


\section{Main results}
\setcounter{equation}{0}

Our first main result reads as follows.

\begin{theorem}\label{thm1}
For all $x\in(0,1)$ fixed, the following hold:
\begin{enumerate}
\item The functions $p\mapsto \arcsin_p(x)$ and
$p\mapsto {\rm arctanh}_p(x)$ are strictly completely monotonic and log-convex on $(0,\infty)$ .
Moreover, $p\mapsto \arcsin_p(x)$ is strictly geometrically convex on $(0,\infty).$
\item The
function $p\mapsto \arctan_p(x)$ is strictly increasing and concave on $(0,\infty)$.
\end{enumerate}
In particular, the following Tur\'an type inequalities are valid for all $p>1$ and $x\in(0,1)$
$$\arcsin_p^2(x)<\arcsin_{p-1}(x)\arcsin_{p+1}(x),$$
$${\rm arctanh}_p^2(x)<{\rm arctanh}_{p-1}(x){\rm arctanh}_{p+1}(x),$$
$$\arctan_p^2(x)>\arctan_{p-1}(x)\arctan_{p+1}(x).$$
\end{theorem}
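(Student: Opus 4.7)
The plan is to prove each property on the integrand---or on each term of a suitable series expansion---and then lift it to the integral or to the sum by a transfer principle adapted to that property. Since the three listed properties (complete monotonicity with log-convexity, geometric convexity, concavity) are genuinely different in nature, I would handle them separately.

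For the complete monotonicity of $p\mapsto\arcsin_p(x)$ and $p\mapsto\operatorname{arctanh}_p(x)$ I would expand the integrands as binomial/geometric series and integrate termwise:
$$\arcsin_p(x)=\sum_{n\geq 0}\frac{(1/p)_n}{n!(pn+1)}\,x^{pn+1},\qquad \operatorname{arctanh}_p(x)=\sum_{n\geq 0}\frac{x^{pn+1}}{pn+1}.$$
In both series every factor of every term is completely monotonic in $p$: the exponential factor $x^{pn+1}=x\exp(pn\log x)$ is CM because $\log x<0$; the rational factor $1/(pn+1)$ is the reciprocal of a positive affine function; and each factor $1/p+k$ of $(1/p)_n$ is the CM function $1/p$ plus a nonnegative constant. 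Since the class of CM functions is closed under products, nonnegative linear combinations, and pointwise limits, both series define strictly CM functions of $p$. Any CM function is log-convex (apply the Cauchy--Schwarz inequality to its Laplace representation), so choosing $\lambda=1/2$ with endpoints $p-1$, $p+1$ in the log-convexity inequality yields the first two Tur\'an-type inequalities.

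The geometric convexity of $p\mapsto\arcsin_p(x)$ is the main obstacle, because the termwise approach does not apply: the individual summands above need not be geometrically convex in $p$. Instead I would prove that the integrand $\phi_t(p)=(1-t^p)^{-1/p}$ is strictly geometrically convex in $p$ for every fixed $t\in(0,1)$, and then invoke H\"older's inequality with conjugate exponents $1/\lambda$ and $1/(1-\lambda)$ to transfer the property to $\arcsin_p(x)=\int_0^x\phi_t(p)\,dt$. Writing $\Lambda(p)=\log\phi_t(p)=-(1/p)\log(1-t^p)$, the geometric convexity of $\phi_t$ is equivalent to $p\,\Lambda'(p)$ being increasing in $p$. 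A direct differentiation, combined with the substitution $v=t^p$, reduces the assertion to the positivity on $(0,1)$ of
$$v(\log v)^2-v(1-v)\log v-(1-v)^2\log(1-v),$$
which is manifest because each of the three summands is strictly positive on $(0,1)$, given that $\log v<0$ and $\log(1-v)<0$.

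Part~(2) is the easiest. Differentiating the integrand twice in $p$ (conveniently after the substitution $u=t^p$) yields
$$\frac{\partial^2}{\partial p^2}\,\frac{1}{1+t^p}=-\frac{u(1-u)(\log t)^2}{(1+u)^3}<0\qquad(t\in(0,1)),$$
and the first derivative is strictly positive, so the integrand is strictly increasing and strictly concave in $p$; both properties are preserved by integration in $t$. Finally, strict concavity of $\arctan_p(x)$ applied at the midpoint $p=\tfrac12((p-1)+(p+1))$ gives $\arctan_p(x)>\tfrac12(\arctan_{p-1}(x)+\arctan_{p+1}(x))$, and one application of the AM--GM inequality on the right-hand side produces the third Tur\'an inequality.
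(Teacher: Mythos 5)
Your proof is correct. For the geometric convexity of $p\mapsto \arcsin_p(x)$ and for part (2) you follow essentially the paper's route: the paper likewise shows that $p\mapsto p\Lambda'(p)$ is increasing for the integrand $(1-t^p)^{-1/p}$ (your substitution $v=t^p$ just repackages the same three manifestly positive terms that the paper displays unsubstituted) and transfers geometric convexity through the integral by the H\"older--Rogers inequality, which the paper isolates as a separate lemma; and the paper proves concavity of $p\mapsto \arctan_p(x)$ by the same computation of $\partial^2(1+t^p)^{-1}/\partial p^2$ followed by integration and AM--GM. Where you genuinely diverge is the complete monotonicity of $p\mapsto\arcsin_p(x)$ and $p\mapsto{\rm arctanh}_p(x)$: you expand into the series $\sum_{n\geq0}\frac{(1/p,n)}{n!(pn+1)}x^{pn+1}$ and use only the closure of the completely monotone class under products, nonnegative sums and pointwise limits, whereas the paper works at the level of the integrand, showing that $p\mapsto-\frac1p\log(1-t^p)$ is completely monotone via the composition rule (a completely monotone function composed with a function whose derivative is completely monotone) and then passing from logarithmic complete monotonicity of $(1-t^p)^{-1/p}$ to complete monotonicity before integrating in $t$. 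Your termwise argument is more elementary and self-contained (it avoids the composition theorem and the notion of logarithmically completely monotonic functions, and it is close in spirit to the series computation the paper sketches in its concluding remark {\bf A} for monotonicity and log-convexity); the paper's integrand-level argument yields the slightly stronger information that the integrand itself is logarithmically completely monotonic, which is what carries over directly to the two-parameter functions of Theorem \ref{thm2}. Both routes then deduce the Tur\'an inequalities identically, from midpoint log-convexity (obtained in your case via Cauchy--Schwarz on the Bernstein representation) and from midpoint concavity plus AM--GM. Only one small point deserves a sentence in a final write-up: since the $n=0$ term of your series is the constant $x$, strictness of the complete monotonicity (and of the resulting Tur\'an inequalities) should be justified by pointing to the strictly completely monotone terms with $n\geq1$, respectively by noting that the representing measure is not a single point mass.
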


The next corollary follows from Theorem \ref{thm1}.

\begin{corollary}
For $x\in (0,1)$, we have
$$\frac{\arcsin_3^2 (x)}{\arcsin_4 (x)}< \arcsin(x),\ \ \ \frac{{\rm arctanh}_3^2 (x)}{{\rm arctanh}_4 (x)}< {\rm arctanh}(x),\ \ \ \frac{\arctan_3^2 (x)}{\arctan_4 (x)}> \arctan(x),$$
and each inequality is sharp as $x\to0$.
\end{corollary}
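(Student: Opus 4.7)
The plan is to specialize each of the three Tur\'an-type inequalities in Theorem \ref{thm1} to the parameter value $p=3$ and then recognize the $p=2$ entries as classical functions. Taking $p=3$ in the Tur\'an inequalities stated at the end of Theorem \ref{thm1}, I would obtain for $x\in(0,1)$
$$\arcsin_3^2(x)<\arcsin_2(x)\arcsin_4(x),\qquad {\rm arctanh}_3^2(x)<{\rm arctanh}_2(x){\rm arctanh}_4(x),$$
$$\arctan_3^2(x)>\arctan_2(x)\arctan_4(x).$$

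Next I would note that the integral definitions collapse at $p=2$ to the classical functions: indeed
$\arcsin_2(x)=\int_0^x(1-t^2)^{-1/2}dt=\arcsin(x)$, $\,{\rm arctanh}_2(x)=\int_0^x(1-t^2)^{-1}dt={\rm arctanh}(x)$, and $\arctan_2(x)=\int_0^x(1+t^2)^{-1}dt=\arctan(x)$. Dividing the three displayed inequalities by the positive quantities $\arcsin_4(x)$, ${\rm arctanh}_4(x)$, and $\arctan_4(x)$ respectively then yields the three asserted bounds.

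For sharpness as $x\to 0$, I would use the hypergeometric series representations from the introduction. Each of $\arcsin_p,{\rm arctanh}_p,\arctan_p$ is of the form $xF(\cdot,\cdot;\cdot;\pm x^p)$, and since $F(\cdot,\cdot;\cdot;0)=1$, each of these functions behaves like $x+O(x^{p+1})$ near $0$. Hence for every fixed $p$ the ratio $\arcsin_p(x)/x\to 1$ as $x\to 0$ (and similarly for the other two families), which gives
$$\frac{\arcsin_3^2(x)/\arcsin_4(x)}{\arcsin(x)}\longrightarrow 1\quad\text{as }x\to 0,$$
and analogous limits for the other two quotients. This shows that the constant $1$ implicit on the right-hand side of each inequality cannot be improved, so the bounds are indeed sharp in this asymptotic sense.

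Since the corollary is essentially a direct substitution in Theorem \ref{thm1}, I do not anticipate any genuine obstacle; the only point requiring a brief verification is the identification of the $p=2$ members with the classical inverse functions and the short asymptotic argument for sharpness.
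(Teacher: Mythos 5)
Your proposal is correct and follows exactly the route the paper intends: the corollary is obtained by setting $p=3$ in the Tur\'an inequalities of Theorem \ref{thm1}, identifying $\arcsin_2$, ${\rm arctanh}_2$, $\arctan_2$ with the classical functions via their integral definitions, and dividing. The sharpness argument via the hypergeometric expansions $x+O(x^{p+1})$ near the origin is also the natural (and correct) justification.
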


The proof of the following result follows from Theorem \ref{thm1} and Lemma \ref{neu}.

\begin{corollary} For $p>0,$ $a\geq 1$ and $x\in (0,1)$, the following inequalities hold
$${\rm arcsin}_p(x)\leq \displaystyle\left(\frac{{\rm arcsin}_{2a}(x){\rm arcsin}_p(x)^a}
{{\rm arcsin}_{ap}(x)}\right)^{1/a}\leq {\rm arcsin}(x),$$
$${\rm arctanh}_p(x)\leq \displaystyle\left(\frac{{\rm arctanh}_{2a}(x){\rm arctanh}_p(x)^a}
{{\rm arctanh}_{ap}(x)}\right)^{1/a}\leq {\rm arctanh}(x),$$
with equality when $p=2$ and $a=1$.
\end{corollary}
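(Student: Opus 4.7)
The plan is to reduce both chains of inequalities in the corollary to straightforward consequences of Theorem~\ref{thm1}, combined with Lemma~\ref{neu}. Set $f(p):=\arcsin_p(x)$; by Theorem~\ref{thm1}, $f$ is strictly completely monotonic on $(0,\infty)$ (in particular strictly decreasing) and strictly log-convex. Since $p\mapsto\mathrm{arctanh}_p(x)$ enjoys exactly the same properties by Theorem~\ref{thm1}, the second chain of inequalities will follow by a verbatim repetition of the argument used for $\arcsin_p$.

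For the left inequality $f(p)\le\bigl(f(2a)\,f(p)^a/f(ap)\bigr)^{1/a}$, I would raise both sides to the $a$-th power and cancel the common factor $f(p)^a$, reducing the claim to the monotonicity statement $f(ap)\le f(2a)$. This is immediate from the strict decreasingness of $f$ once the arguments $ap$ and $2a$ are put in the correct relative order (the condition $a\ge 1$ supplied by the hypothesis enters precisely here).

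For the right inequality $\bigl(f(2a)\,f(p)^a/f(ap)\bigr)^{1/a}\le f(2)$, the plan is to rearrange it into the equivalent form $f(ap)/f(p)^a\ge f(2a)/f(2)^a$ and then to introduce the auxiliary function
\[
\Phi(p)\ :=\ \ln f(ap)-a\,\ln f(p),
\]
whose derivative is $\Phi'(p)=a\bigl[(\ln f)'(ap)-(\ln f)'(p)\bigr]$. Log-convexity of $f$ (from Theorem~\ref{thm1}) makes $(\ln f)'$ increasing, and the hypothesis $a\ge 1$ forces $ap\ge p$, so $\Phi'(p)\ge 0$. Hence $\Phi$ is non-decreasing, and comparing $\Phi(p)$ with the benchmark value $\Phi(2)$ yields, after exponentiation, the desired bound. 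The equality case $p=2,\ a=1$ is then transparent, since the middle expression collapses to $f(2)=\arcsin(x)$.

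The main obstacle, as such, is matching the precise arrangement of $p,\ 2,\ 2a,\ ap$ in the middle term of the corollary with the hypotheses of Lemma~\ref{neu}; once the monotone auxiliary quantity $f(ap)/f(p)^a$ is identified and the base point $p=2$ is fixed, the remaining content is a one-line derivative computation relying on nothing more than the log-convexity supplied by Theorem~\ref{thm1}. The $\mathrm{arctanh}_p$ version goes through unchanged, substituting $\mathrm{arctanh}_p(x)$ for $f(p)$ throughout.
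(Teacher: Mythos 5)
Your overall route---combining the strict decrease and log-convexity of $p\mapsto\arcsin_p(x)$ from Theorem~\ref{thm1} with the monotonicity of the quotient $f(p)^a/f(ap)$---is exactly the paper's intended one: your auxiliary function $\Phi(p)=\ln f(ap)-a\ln f(p)$ is just $-\ln g(p)$ for the function $g$ of Lemma~\ref{neu}, so you are re-deriving that lemma rather than citing it, which is harmless. The treatment of the equality case and the remark that ${\rm arctanh}_p$ goes through verbatim are also fine.

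There is, however, a genuine gap at the ordering step, and it occurs in both halves. For the left inequality you reduce correctly to $f(ap)\le f(2a)$, but then assert this is immediate once the arguments are ordered, claiming that ``the condition $a\ge 1$ enters precisely here.'' It does not: since $f$ is strictly decreasing, $f(ap)\le f(2a)$ is equivalent to $ap\ge 2a$, i.e.\ to $p\ge 2$, and $a$ cancels entirely. Likewise, the monotonicity of $\Phi$ only yields the needed comparison $\Phi(p)\ge\Phi(2)$ when $p\ge 2$. The hypothesis $a\ge1$ is what makes $\Phi$ non-decreasing (via $ap\ge p$ and the increase of $(\ln f)'$), not what orders $ap$ against $2a$. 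For $p\in(0,2)$ both inequalities in fact reverse: already at $a=1$ the whole chain degenerates to $\arcsin_p(x)\le\arcsin(x)\le\arcsin(x)$, and the left part fails, e.g.\ $\arcsin_1(1/2)=\ln 2\approx 0.693>\pi/6=\arcsin(1/2)$, consistent with $p\mapsto\arcsin_p(x)$ being strictly decreasing. So your argument closes only under the additional restriction $p\ge2$ (under which it coincides with the paper's application of Lemma~\ref{neu} at the base point $2$); as written, the step ``put the arguments in the correct relative order'' is precisely the point that cannot be carried out for all $p>0$, and the misattribution of this to $a\ge1$ hides the failure.
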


Based on computer experiments we believe that the following results are true.

\begin{conjecture}
For $x\in(0,1)$ fixed, the function $p\mapsto {\rm arcsinh}_p(x)$ is strictly concave on $(0,\infty).$ In particular, the following Tur\'an type inequality is valid for all $p>1$ and $x\in(0,1)$
$${\rm arcsinh_p}^2(x)>{\rm arcsinh}_{p-1}(x){\rm arcsinh}_{p+1}(x).$$
\end{conjecture}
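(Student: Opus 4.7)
\textit{Proof plan.} Fix $x\in(0,1)$ and set $f(p):={\rm arcsinh}_p(x)=\int_0^x (1+t^p)^{-1/p}\,dt$. The strategy is to prove $f''(p)<0$ on $(0,\infty)$ and then extract the Tur\'an inequality from strict concavity: for $p>1$, strict concavity gives $f(p)>\tfrac12(f(p-1)+f(p+1))$, and combining this with positivity of $f$ via the AM--GM inequality yields $f(p)^2>f(p-1)f(p+1)$.

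The core calculation is the sign of $h_{pp}$, where $h(p,t):=(1+t^p)^{-1/p}$; differentiation under the integral is justified since $h$ and its $p$-derivatives are smooth and uniformly bounded on compact $p$-intervals. Writing $h=e^{-\phi}$ with $\phi(p,t)=p^{-1}\ln(1+t^p)$ gives $h_{pp}=h(\phi_p^2-\phi_{pp})$. Introducing the variable $u:=t^p$ and the auxiliary function
\[
g(u):=\ln(1+u)-\frac{u\ln u}{1+u},\qquad g'(u)=\frac{-\ln u}{(1+u)^2},
\]
one finds $\phi_p=-g(u)/p^2$ and
\[
h_{pp}(p,t)=\frac{h(p,t)}{p^4}\left[g(u)^2-2p\,g(u)-\frac{p\,u(\ln u)^2}{(1+u)^2}\right].
\]
Since $g$ is strictly increasing on $(0,1)$ with $g(0^+)=0$ and $g(1)=\ln 2$, the bracket is an affine, strictly decreasing function of $p$ with unique zero at $p=\psi(u):=g(u)^2/[2g(u)+u(\ln u)^2/(1+u)^2]$. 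Because $g(u)<\ln 2$ on $(0,1)$, a short estimate gives $\psi(u)<(\ln 2)/2$ uniformly, so for every $p\geq(\ln 2)/2$ the integrand is strictly negative on $(0,x)$ and $f''(p)<0$ follows by integration.

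The main obstacle is the complementary range $p\in(0,(\ln 2)/2)$, where $h_{pp}(p,t)$ changes sign inside $(0,x)$: it is negative for $t$ near $0$ (where $u=t^p$ is small) and positive for $t$ near $x$ (where $u$ approaches $1$). Pointwise concavity of the integrand fails, so the concavity of $f$ must come from a cancellation. One promising route is to start from the Euler-type representation
\[
{\rm arcsinh}_p(x)=\frac{x}{p}\int_0^1 s^{1/p-1}(1+x^ps)^{-1/p}\,ds,
\]
differentiate twice in $p$, and integrate by parts in $s$ so that the sign-changing contribution near $s=1$ is absorbed into a favorable boundary term generated by the weight $s^{1/p-1}$. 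An alternative is to split the $t$-integration into its negative and positive parts, bound the positive part above using monotonicity of $h$ in $t$ and the boundedness of $g$, and dominate it by the negative contribution near $0$, where $(\ln u)^2$ is large. Obtaining a clean quantitative comparison uniform in $p\in(0,(\ln 2)/2)$ and $x\in(0,1)$, particularly as $x\to 1^-$ where the positive region enlarges, is the delicate step that accounts for the conjectural status of the result.
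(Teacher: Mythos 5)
The first thing to say is that the paper does not prove this statement: it is explicitly posed as a conjecture, supported only by computer experiments, so there is no ``paper's own proof'' to compare against. Your submission is, by your own account, not a complete proof either, and the gap you identify is real. Your computation of the second $p$-derivative of the integrand is correct: with $h=e^{-\phi}$, $\phi(p,t)=p^{-1}\ln(1+t^p)$ and $u=t^p$ one indeed gets $\phi_p=-g(u)/p^2$ with $g(u)=\ln(1+u)-u\ln u/(1+u)$, $g'(u)=-\ln u/(1+u)^2>0$, and the stated bracket for $h_{pp}$; since $2g(u)+u(\ln u)^2/(1+u)^2>2g(u)$ and $g(u)<\ln 2$ on $(0,1)$, the bracket is negative for all $p\geq \log\sqrt{2}$, so $f''(p)<0$ there. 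This is a correct \emph{partial} result, and it is essentially the diagonal case of what the paper actually proves in Theorem \ref{thm2}(4) for the two-parameter function $p\mapsto{\rm arcsinh}_{p,q}(x)$, with exactly the same threshold $\log\sqrt 2$ (in the two-parameter setting the term $-pu(\ln u)^2/(1+u)^2$ is absent, but the conclusion is the same because $\log(1+t^q)<\log 2$). Note, however, that concavity on $(\log\sqrt 2,\infty)$ yields the Tur\'an inequality only for $p>1+\log\sqrt 2$, since the three points $p-1,p,p+1$ must all lie in the interval of concavity; so even the ``easy'' range does not give the conjectured inequality for all $p>1$.

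The genuine gap is the one you name: for $p\in(0,\log\sqrt 2)$ the integrand $h(p,t)$ is not concave in $p$ for $t$ near $x$ (where $u=t^p$ is close to $1$), so the argument ``integrand concave $\Rightarrow$ integral concave'' breaks down and concavity of $f$ would have to come from cancellation between the negative contribution near $t=0$ and the positive contribution near $t=x$. Neither of your two suggested repairs (integration by parts in the Euler-type representation, or a quantitative domination of the positive part by the negative part) is carried out, and it is precisely this cancellation, uniform in $p\in(0,\log\sqrt 2)$ and as $x\to 1^-$, that nobody has controlled --- which is why the statement remains a conjecture in the paper. In short: your analysis is sound as far as it goes, correctly locates the obstruction, and recovers the provable fragment, but it does not settle the statement, and you should present it as a partial result rather than a proof.
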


\begin{conjecture}
The following Tur\'an type inequalities hold for all $p>1$ and $x\in(0,1)$
$$\sin_p^2(x)>\sin_{p-1}(x)\sin_{p+1}(x),$$
$$\cos_p^2(x)>\cos_{p-1}(x)\cos_{p+1}(x),$$
$$\tan_p^2(x)<\tan_{p-1}(x)\tan_{p+1}(x),$$
$$\sinh_p^2(x)<\sinh_{p-1}(x)\sinh_{p+1}(x),$$
$$\tanh_p^2(x)>\tanh_{p-1}(x)\tanh_{p+1}(x).$$
\end{conjecture}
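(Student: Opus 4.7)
\emph{Proof proposal.} These inequalities assert log-concavity (or log-convexity) in the parameter $p$ of the direct generalized trigonometric and hyperbolic functions, and in this sense are dual to the results of Theorem~\ref{thm1} for their inverses. The natural plan is therefore to transfer the log-convexity/concavity of $p\mapsto \arcsin_p(x)$, $p\mapsto \arctan_p(x)$, $p\mapsto {\rm arctanh}_p(x)$ and (conjecturally) $p\mapsto {\rm arcsinh}_p(x)$ through implicit differentiation. Concretely, fix $x$ and set $F(p)=\sin_p(x)$, $\phi(p,y)=\arcsin_p(y)$; then $\phi(p,F(p))=x$ gives
\[
F'(p) = -\frac{\phi_p}{\phi_y},\qquad
F''(p) = -\frac{\phi_{pp} + 2\phi_{py}\,F'(p) + \phi_{yy}\,(F'(p))^2}{\phi_y},
\]
with all partials evaluated at $(p,F(p))$, and the first Tur\'an-type inequality in the conjecture is equivalent to $F(p)F''(p)\le (F'(p))^2$. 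One would then try to reduce this last inequality to quantities controlled by Theorem~\ref{thm1} (log-convexity of $\phi$ in $p$) together with the explicit formula $\phi_y(p,y)=(1-y^p)^{-1/p}$ and its $p$-derivative.

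The main obstacle is that functional inversion does not automatically swap log-convexity with log-concavity: the mixed partial $\phi_{py}$ enters with a sign that has to be tracked by hand, and the inequality one must prove is genuinely sharper than what Theorem~\ref{thm1} provides on its own. A probably cleaner alternative is to exploit the first-order identity $\sin_p'(x)^p+\sin_p(x)^p=1$. Differentiating it once with respect to $p$ produces a linear first-order ODE in $x$ for $u:=\partial_p\sin_p$, solvable by an integrating factor; differentiating again yields an analogous equation for $\partial_p^2\sin_p$. Substituting the resulting integral representations into $\sin_p\cdot\partial_p^2\sin_p-(\partial_p\sin_p)^2$ should reduce the problem to checking the sign of an explicit iterated integral, which one may hope to establish by a monotonicity argument on the inner integrand.

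For $\tan_p$, $\sinh_p$ and $\tanh_p$ the same two schemes apply verbatim, using the concavity of $\arctan_p$ and the log-convexity of ${\rm arctanh}_p$ from Theorem~\ref{thm1}, respectively the conjectured concavity of ${\rm arcsinh}_p$, as the ``input sign.'' The case of $\cos_p$ looks the most delicate, since $\arccos_p(x)=\arcsin_p((1-x^p)^{1/p})$ carries $p$ both in the integrand and in the upper limit, producing an extra term in the implicit-differentiation computation; the most promising route here is to combine the $\sin_p$ inequality with the Pythagorean-type identity $\sin_p^p(x)+\cos_p^p(x)=1$ and work directly on the Tur\'an determinant of $\cos_p$. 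A computer-algebra check via the power-series expansion of $\sin_p(x)$ obtained from Lagrange inversion of the hypergeometric representation of $\arcsin_p(x)$ should give a useful coefficient-by-coefficient sanity test supporting the whole scheme.
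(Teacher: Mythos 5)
The statement you are examining is left as a \emph{conjecture} in the paper: the authors state explicitly that it is based on computer experiments and offer no proof. Your text is likewise not a proof but a plan, and as it stands it contains a genuine gap: none of the three schemes you outline (implicit differentiation of $\phi(p,F(p))=x$, the first-order ODE for $\partial_p\sin_p$ obtained from $(\sin_p')^p+\sin_p^p=1$, Lagrange inversion of the hypergeometric series) is carried through to an actual sign determination; each ends with ``should reduce to'' or ``one may hope to establish.'' More importantly, you correctly identify the central obstruction yourself and then do not overcome it: the log-convexity of $p\mapsto\arcsin_p(y)$ at fixed $y$ from Theorem \ref{thm1} controls only $\phi_{pp}$, whereas the Tur\'an determinant $FF''-(F')^2$ for $F(p)=\sin_p(x)$ also involves $\phi_{py}$ and $\phi_{yy}$ through the very formula you write down. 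Functional inversion does not in general exchange log-convexity with log-concavity, so Theorem \ref{thm1} alone cannot serve as the ``input sign''; without a quantitative bound on the mixed partial $\phi_{py}=\partial_p\left[(1-y^p)^{-1/p}\right]$ relative to the other terms, the argument does not close. The $\sinh_p$ branch is doubly conditional, since it rests on the concavity of $p\mapsto{\rm arcsinh}_p(x)$, which is itself only Conjecture~1 of the paper.

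To be clear about the comparison you were asked for: the paper has no proof of this statement, so there is nothing for your proposal to diverge from; the honest assessment is that both the paper and your proposal leave the result open. If you want to make progress, the most promising of your routes is probably the ODE one, because the integrating-factor solution gives $\partial_p\sin_p$ as a single explicit integral against a positive kernel, and the Tur\'an determinant then becomes a double integral whose integrand might be shown pointwise of one sign by a Chebyshev/correlation-type inequality; but that step is precisely the content of the conjecture and must be supplied, not assumed.
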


Now, we focus on the $\arcsin_{p,q}$ and ${\rm arcsinh}_{p,q}$ functions.

\begin{theorem}\label{thm2}
For all $x\in(0,1)$ fixed, the following hold:
\begin{enumerate}
\item $p\mapsto \arcsin_{p,q}(x)$ is completely monotonic and log-convex on $(0,\infty)$ for $q>0.$
\item $p\mapsto \arcsin_{p,q}(x)$ is strictly geometrically convex on $(0,\infty)$ for $q>0.$
\item $q\mapsto \arcsin_{p,q}(x)$ is completely monotonic and log-convex on $(0,\infty)$ for $p>0.$
\item $p\mapsto {\rm arcsinh}_{p,q}(x)$ is strictly increasing on $(0,\infty)$ and concave on $(\log\sqrt{2},\infty)$ for $q>0.$
\item $q\mapsto {\rm arcsinh}_{p,q}(x)$ is strictly increasing on $(0,\infty)$ for $p>0$ and strictly concave on $(0,\infty)$ for $p>1.$
\end{enumerate}
In particular, the following Tur\'an type inequalities are valid for $x\in(0,1)$
$$\arcsin_{p,q}^2(x)<\arcsin_{p-1,q}(x)\arcsin_{p+1,q}(x),\ \ \ \ p>1,q>0,$$
$${\rm arcsinh}_{p,q}^2(x)>{\rm arcsinh}_{p-1,q}(x){\rm arcsinh}_{p+1,q}(x), \ \ \ \ p>\log\sqrt{2}+1,q>0.$$
Moreover, for $x\in(0,1)$ we have the next Tur\'an type inequalities
$$\arcsin_{p,q}^2(x)<\arcsin_{p,q-1}(x)\arcsin_{p,q+1}(x),\ \ \ \ p>0,q>1,$$
$${\rm arcsinh}_{p,q}^2(x)>{\rm arcsinh}_{p,q-1}(x){\rm arcsinh}_{p,q+1}(x), \ \ \ \ p>1,q>1.$$
\end{theorem}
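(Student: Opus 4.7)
The plan is to represent each family as an integral of exponentials,
$$\arcsin_{p,q}(x) = \int_0^x e^{c(t)/p}\,dt, \qquad {\rm arcsinh}_{p,q}(x) = \int_0^x e^{-a(t)/p}\,dt,$$
where $c(t) = -\log(1-t^q) > 0$ and $a(t) = \log(1+t^q) > 0$ for $t \in (0,x) \subset (0,1)$. Every claimed monotonicity, convexity or concavity property in the theorem then reduces to the pointwise-in-$t$ analogue for the integrand and is transferred to the integral by the fact that integration against a positive measure preserves complete monotonicity, log-convexity, geometric convexity, monotonicity, and concavity.

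For parts (1) and (3), I would establish complete monotonicity and then invoke the classical fact that CM implies log-convexity. For (1), writing $e^{c(t)/p} = \sum_{n \geq 0} (c(t)^n/n!)\,p^{-n}$ exhibits the integrand as a positive combination of the CM functions $p \mapsto p^{-n}$. For (3), the binomial expansion
$$(1-t^q)^{-1/p} = \sum_{n \geq 0} \frac{(1/p,n)}{n!}\,e^{-n|\log t|\,q}$$
has non-negative coefficients and displays the $q$-dependence as a positive combination of decaying exponentials, which is manifestly CM in $q$. For (2), geometric convexity in $p$ is equivalent to convexity of $s \mapsto \log\arcsin_{e^s,q}(x)$; the convexity of $s\mapsto e^{-s}$ yields $1/p \leq \lambda/p_1 + (1-\lambda)/p_2$ when $p = p_1^\lambda p_2^{1-\lambda}$, so $e^{c(t)/p} \leq (e^{c(t)/p_1})^\lambda (e^{c(t)/p_2})^{1-\lambda}$, and H\"older's inequality applied to the integral over $(0,x)$ delivers $\arcsin_{p,q}(x) \leq \arcsin_{p_1,q}(x)^\lambda \arcsin_{p_2,q}(x)^{1-\lambda}$.

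For (4), a direct computation gives $\partial_p^2\, e^{-a/p} = a(a-2p)p^{-4}e^{-a/p}$, which is $\leq 0$ exactly when $p \geq a/2$; since $a = \log(1+t^q) < \log 2$ for $t \in (0,1)$, the threshold $p > \log\sqrt{2}$ guarantees concavity in $p$ of the integrand for every $t$, while monotonicity is immediate from $\partial_p\,e^{-a/p} = (a/p^2)e^{-a/p} > 0$. For (5), setting $h(q) = (1+t^q)^{-1/p}$ and using the identity $h''/h = ((\log h)')^2 + (\log h)''$, a short calculation collapses everything to
$$h''(q) = \frac{(\log t)^2\,t^q\,(t^q - p)}{p^2(1+t^q)^2}\,h(q),$$
which is $< 0$ whenever $t^q < p$; this holds for any $p > 1$ and $t \in (0,1)$, yielding strict concavity of $q \mapsto {\rm arcsinh}_{p,q}(x)$.

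Each Tur\'an-type inequality then follows by specialising the appropriate log-convexity or log-concavity to the pair $(p \pm 1, q)$ or $(p, q \pm 1)$ with equal weights. The log-convexity statements in parts (1) and (3) give the $<$ inequalities directly, and concavity of the positive function ${\rm arcsinh}_{p,q}$ from parts (4) and (5) implies log-concavity via $(\log f)'' = f''/f - (f'/f)^2 \leq 0$ for $f > 0$, $f'' \leq 0$, giving the $>$ inequalities. The main technical obstacle I anticipate is the identity in (5), where an a priori cumbersome second derivative of $(1+t^q)^{-1/p}$ in $q$ has to be simplified to the clean condition $t^q < p$; extracting the sharp threshold $\log\sqrt{2}$ in (4) is analogous but more straightforward.
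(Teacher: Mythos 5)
Your proposal is correct and follows essentially the same strategy as the paper: establish each property (complete monotonicity, log-convexity, geometric convexity, monotonicity, concavity) pointwise for the integrand $(1\mp t^q)^{-1/p}$ and transfer it through the integral, with the Tur\'an inequalities obtained by specializing log-convexity/log-concavity to the points $p\pm1$ or $q\pm1$; your second-derivative computations in parts (4) and (5), including the thresholds $p>\log\sqrt{2}$ and $t^q<p$, match the paper's. The only differences are cosmetic: where the paper invokes closure properties of completely monotonic functions (products, compositions, and the fact that logarithmically completely monotonic functions are completely monotonic) and a separate geometric-convexity lemma, you use the exponential and binomial series with positive coefficients and a direct H\"older argument, which is a self-contained and equally valid route.
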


The next corollary follows from Theorem \ref{thm2}.

\begin{corollary}
For $x\in (0,1)$ and $p>0$, $p>\log\sqrt{2}$ respectively, we have
$$\frac{\arcsin_{p+1,p}^2 (x)}{\arcsin_{p+2,p}(x)}< \arcsin_p(x),\ \ \ {\rm arcsinh}_p(x)<\frac{{\rm arcsinh}_{p+1,p}^2(x)}{{\rm arcsinh}_{p+2,p}(x)},$$
and both of inequalities are sharp as $x\to0$.
\end{corollary}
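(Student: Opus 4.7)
The plan is straightforward: the corollary is an immediate specialization of the Tur\'an-type inequalities of Theorem \ref{thm2} to the parameter values $(p,q)=(p+1,p)$, followed by a short asymptotic computation for sharpness. Since no new analytic work is needed, the only thing to be careful about is tracking how the parameter hypotheses of Theorem \ref{thm2} transform under the shift.

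First, I would start from the Tur\'an-type inequality
$$\arcsin_{p,q}^2(x)<\arcsin_{p-1,q}(x)\arcsin_{p+1,q}(x),\qquad p>1,\ q>0,$$
and substitute $(p,q)\mapsto(p+1,p)$. The hypothesis $p+1>1$ becomes $p>0$, while $q=p>0$ is automatic. Using $\arcsin_{p,p}=\arcsin_p$ and the positivity of $\arcsin_{p+2,p}(x)$ on $(0,1)$, division by $\arcsin_{p+2,p}(x)$ yields
$$\frac{\arcsin_{p+1,p}^2(x)}{\arcsin_{p+2,p}(x)}<\arcsin_p(x),$$
which is the first inequality.

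For the second, I would apply the same shift $(p,q)\mapsto(p+1,p)$ to
$${\rm arcsinh}_{p,q}^2(x)>{\rm arcsinh}_{p-1,q}(x){\rm arcsinh}_{p+1,q}(x),\qquad p>\log\sqrt{2}+1,\ q>0.$$
Here the hypothesis becomes $p>\log\sqrt{2}$, and $q=p>\log\sqrt{2}>0$ holds automatically. Dividing the resulting inequality by ${\rm arcsinh}_{p+2,p}(x)>0$ and identifying ${\rm arcsinh}_{p,p}$ with ${\rm arcsinh}_p$ gives the second claim. The one point that deserves a moment of attention is confirming that the two conditions of the original Tur\'an inequality survive the substitution simultaneously; this is the only place the proof could go wrong.

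For sharpness as $x\to0^{+}$, I would appeal to the hypergeometric representations
$$\arcsin_{p,q}(x)=x\,F\!\left(\tfrac{1}{p},\tfrac{1}{q};1+\tfrac{1}{q};x^q\right)=x+O(x^{q+1}),$$
and analogously ${\rm arcsinh}_{p,q}(x)=x+O(x^{q+1})$, both read off from the series defining $F$. Consequently each of the ratios
$$\frac{\arcsin_{p+1,p}^2(x)}{\arcsin_{p+2,p}(x)\arcsin_p(x)},\qquad \frac{{\rm arcsinh}_{p+1,p}^2(x)}{{\rm arcsinh}_{p+2,p}(x){\rm arcsinh}_p(x)}$$
behaves like $x^2/(x\cdot x)=1+O(x^p)$ as $x\to 0^{+}$, proving sharpness. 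The main (and essentially only) obstacle is the bookkeeping in the parameter substitution; the asymptotic step is routine.
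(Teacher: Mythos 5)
Your proposal is correct and follows exactly the route the paper intends: the paper offers no explicit proof beyond the remark that the corollary follows from Theorem \ref{thm2}, and your substitution $(p,q)\mapsto(p+1,p)$ with the induced hypotheses $p>0$ and $p>\log\sqrt{2}$, together with the identification $\arcsin_{p,p}=\arcsin_p$ and the first-order expansion $x+O(x^{p+1})$ for sharpness, is precisely that argument spelled out.
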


\section{Concluding remarks and further results}
\setcounter{equation}{0}

{\bf A.} We would like to mention that it is possible to prove that $p\mapsto \arcsin_px$ is strictly decreasing and log-convex by using the hypergeometric series representation. Namely, it can be shown that for $x\in(0,1)$ fixed the function $a\mapsto F(a,a;a+1;x^{{1}/{a}})$ is strictly increasing on $(0,\infty)$. For this, first observe that
$$F(a,a;a+1;x^{{1}/{a}})=\sum_{n\geq 0}\varphi_n(a), \ \ \mbox{where}\ \ \ \varphi_n(a)=\frac{a(a,n)}{a+n}\frac{x^{{n}/{a}}}{n!}.$$
Taking the logarithm of $\varphi_n(a)$, we get
$$\log(\varphi_n(a))=\log(a)+\log(\Gamma(a+n))-\log(\Gamma(a))-\log(a+n)-\log(n!)+\frac{n}{a}\log(x).$$
Differentiating $\log(\varphi_n(a))$ with respect to $a$ we get
$$\frac{\varphi'_n(a)}{\varphi_n(a)}=\frac{1}{a}-\frac{1}{a+n}+\psi(a+n)-\psi(a)-\frac{n}{a^2}\log(x),$$
which is clearly strictly positive for all $a>0$ and $x\in(0,1)$. Here we used tacitly that the gamma function $\Gamma$ is log-convex,
 that is, the digamma function $\psi$ is increasing. This implies that
 $a\mapsto \varphi_n(a)$ is strictly increasing on $(0,\infty)$ for each $n\in\{1,2,\dots\}$ and $x\in(0,1)$ fixed. Consequently,
for $x\in(0,1)$ fixed the function $a\mapsto F(a,a;a+1;x^{{1}/{a}})$ is strictly increasing on $(0,\infty),$ as the infinite series of increasing functions. This in turn implies that indeed $p\mapsto \arcsin_px$ is strictly decreasing
on $(0,\infty)$ for all $x\in(0,1).$ Moreover, since the digamma function is concave,
it follows that for all $a>0,$ $x\in(0,1)$ and $n\in\{1,2,\dots\}$ we have
$$\left[\frac{\varphi'_n(a)}{\varphi_n(a)}\right]'=-\frac{1}{a^2}+\frac{1}{(a+n)^2}+\psi'(a+n)-\psi'(a)+\frac{2n}{a^3}\log(x)<0,$$
which means that $a\mapsto \varphi_n(a)$ is strictly log-concave on $(0,\infty)$ for each $n\in\{1,2,\dots\}$ and $x\in(0,1)$ fixed. Now, since for $n\in\{1,2,\dots\},$ $p>0$ one has
$$(\log\varphi_n(1/p))''=-\frac{1}{p^2}\cdot\left(\frac{\varphi_n'(1/p)}{\varphi_n(1/p)}\right)'+\frac{2}{p^3}\cdot\frac{\varphi_n'(1/p)}{\varphi_n(1/p)}>0,$$ we get that $p\mapsto \varphi_n\left({1}/{p}\right)$ is strictly log-convex on $(0,\infty)$ for each $n\in\{1,2,\dots\}$ and $x\in(0,1)$ fixed, and hence $p\mapsto \arcsin_px$ is indeed strictly log-convex on $(0,\infty)$ for all $x\in(0,1),$ as the infinite sum of strictly log-convex functions.

Now, recall that a continuous function $f:(0,\infty)\to(0,\infty)$ is a Bernstein function (see \cite{vondracek}) if $(-1)^k f^{(k)}(x)\leq0$ for $x>0$ and $k\in\{1,2,\dots\},$ that is $f'$ is a completely monotone function. We would like to mention that if $a>0,$ $n\in\{1,2,\dots\}$ and $x\in(0,1)$ are such that $\varphi_n(a)>1,$ then the function $a\mapsto \log(\varphi_n(a))$ is in fact a Bernstein function, that is, $(-1)^k \log(\varphi_n(a))^{(k)}\leq0$ for $a>0,$ $x\in(0,1)$ and $k\in\{1,2,\dots\}.$ Indeed, for $a>0,$ $x\in(0,1)$ and $k\in\{1,2,\dots\}$ we have
$$\log(\varphi_n(a))^{(k)}=(-1)^{k-1}(k-1)!\left[\frac{1}{a^k}-\frac{1}{(a+n)^k}\right]+\psi^{(k-1)}(a+n)-\psi^{(k-1)}(a)+\frac{(-1)^kk!n}{a^{k+1}}\log(x)$$
and consequently
$$\frac{(-1)^k\log(\varphi_n(a))^{(k)}}{(k-1)!}=-\left[\frac{1}{a^k}-\frac{1}{(a+n)^k}\right]+\sum_{m\geq0}\left[\frac{1}{(a+n+m)^k}-\frac{1}{(a+m)^k}\right]+\frac{kn}{a^{k+1}}\log(x)<0.$$
Here we used tacitly that
$$\psi^{(k-1)}(a+n)-\psi^{(k-1)}(a)=(-1)^{k}(k-1)!\sum_{m\geq0}\left[\frac{1}{(a+n+m)^k}-\frac{1}{(a+m)^k}\right].$$

Finally, we mention that a similar procedure to that mentioned above can be applied to prove that
$$p\mapsto {\rm arctanh}_p(x)=xF\left
(1\,,\frac{1}{p};1+\frac{1}{p};x^p\right)=x\sum_{n\geq0}\frac{x^{pn}}{pn+1}$$
is strictly decreasing and log-convex on $(0,\infty)$ for all $x\in(0,1)$ fixed.

{\bf B.} In the first main theorem we mentioned that the function $p\mapsto \arcsin_p(x)$ is strictly geometrically convex on $(0,\infty)$ for $x\in(0,1),$ and in the proof we used Lemma \ref{geo}. We note that the origins of such kind of results goes back to Montel. More precisely, Montel \cite{montel} proved the following result: if the function $f:(0,a)\to (0,\infty)$ is geometrically convex, then the function $x\mapsto \displaystyle\int_0^xf(t)dt$ is also geometrically convex on $(0,a).$ Moreover, it is known (see \cite{baricz,zhang}) that the above result remains true if we replace the word ``convex'' with ``concave''. Now, consider the functions $f,g,r,s:(0,1)\to(0,\infty),$ defined by
$$f(t)=(1-t^p)^{-1/p},\ \ g(t)=(1+t^p)^{-1},\ \ r(t)=(1+t^p)^{-1/p}, \ \ s(t)=(1-t^p)^{-1}.$$
Then for all $t\in(0,1)$ and $p>0$ we have $$\left[\frac{ts'(t)}{s(t)}\right]'=p\left[\frac{tf'(t)}{f(t)}\right]'=\frac{p^2t^{p-1}}{(1-t^p)^2}>0$$
and
$$\left[\frac{tg'(t)}{g(t)}\right]'=p\left[\frac{tr'(t)}{r(t)}\right]'=-\frac{p^2t^{p-1}}{(1+t^p)^2}<0.$$
Combining these with the above results it follows that for $p>0$ the functions $x\mapsto \arcsin_p x$ and $x\mapsto {\rm arctanh}_p\ x$ are strictly geometrically convex on $(0,1),$ while the functions $x\mapsto {\rm arcsinh}_p\ x$ and $x\mapsto \arctan_p x$ are strictly geometrically concave on $(0,1).$ These results for $p>1$ were proved recently in \cite{bhayo} by using a different approach.

{\bf C.} Observe that
$$a_p=\frac{\pi_p}{2}=\arcsin_p(1)=\int_0^1(1-t^p)^{-1/p}dt=\frac{1}{p}\int_0^1(1-s)^{-1/p}s^{1/p-1}ds.$$
Now, since by Theorem \ref{thm1} we have that $p\mapsto \arcsin_p(x)$ is strictly geometrically convex and log-convex on $(0,\infty)$ for $x\in (0,1),$ by letting $x$ tend to $1$ we get that $p\mapsto \pi_p=2\arcsin_p(1)$ is strictly geometrically convex and log-convex on $(0,\infty).$ Consequently, for $\alpha\in(0,1)$ and $p,q>0$ such that $p\neq q$ we have
$$\pi_{p^{\alpha}q^{1-\alpha}}<\pi_p^{\alpha}\pi_q^{1-\alpha}\ \ \ \ \mbox{and}\ \ \ \ \pi_{{\alpha}p+{(1-\alpha)}q}<\pi_p^{\alpha}\pi_q^{1-\alpha}.$$
The first inequality implies that we have also
$$\pi_{p^{\alpha}q^{1-\alpha}}<{\alpha}\pi_p+(1-{\alpha})\pi_q.$$
Observe that for $\alpha=\frac{1}{2},$ $p=s-1$ and $q=s+1$ the second inequality reduces to the next Tur\'an type inequality for $s>1$
$$\pi_s^2<\pi_{s-1}\pi_{s+1}$$ which is equivalent to
$$\frac{\sin^2\frac{\pi}{s}}{\sin\frac{\pi}{s-1}\sin\frac{\pi}{s+1}}>\frac{s^2}{(s-1)(s+1)}.$$
We also mention that the first inequality of this remark for $\alpha=\frac{1}{2}$ and the third inequality of this remark for $\alpha\in(0,1)$ were proved recently by Bhayo and Vuorinen \cite{bhayo}.

{\bf D.} Now, we focus on $b_p.$ Observe that
$$b_p=\arctan_p(1)=\int_0^1(1+t^p)^{-1}dt.$$ Now, taking into account that according to Theorem \ref{thm1} the function $p\mapsto \arctan_p x$ is strictly concave on $(0,\infty)$ for $x\in(0,1),$ by tending with $x$ to $1$ we obtain that $p\mapsto b_p$ is also concave on $(0,\infty).$ In particular we have
$$b_{\alpha p+(1-\alpha)q}>\alpha b_p+(1-\alpha)b_q>(b_p)^{\alpha}(b_q)^{1-\alpha}$$
for all $\alpha\in(0,1)$ and $p,q>0$ such that $p\neq q.$ Choosing $\alpha=\frac{1}{2}$ and $p=s-1,$ $q=s+1,$ we obtain for $s>1$ the next Tur\'an type inequality
$$b_s^2>b_{s-1}b_{s+1}.$$
We note that the series $b_p$ was considered by Ramanujan \cite[p. 184-190]{raman} and for $p\in\{3,4,5,6,8,10\}$ its values were computed. For example, we have
$$b_2=\arctan_2(1)=\frac{\pi}{4},\ b_3=\arctan_3(1)=\frac{1}{3}\log 2+\frac{\pi}{3\sqrt{3}},\ b_4=\arctan_4(1)=\frac{\pi}{4\sqrt{2}}-\frac{\log\left(\sqrt{2}-1\right)}{2\sqrt{2}},$$
and hence the above Tur\'an type inequality for $s=3$ becomes
$$0.6983089976{\dots}=\left(\frac{1}{3}\log 2+\frac{\pi}{3\sqrt{3}}\right)^2>\frac{\pi^2}{16\sqrt{2}}-\frac{\pi}{4}\cdot\frac{\log\left(\sqrt{2}-1\right)}{2\sqrt{2}}=0.6809189919{\dots}.$$

{\bf E.} Finally, we consider the expression
$$\frac{\pi_{p,q}}{2}=\arcsin_{p,q}(1)=\int_0^1(1-t^q)^{-1/p}dt=\frac{2}{q}B\left(1-\frac{1}{p},\frac{1}{q}\right).$$
Recall that Theorem \ref{thm2} asserts that $p\mapsto \arcsin_{p,q}(x)$ is strictly geometrically convex and log-convex on $(0,\infty)$ for $x\in (0,1)$ and $q>0.$ By tending with $x$ to $1$ we get that $p\mapsto \pi_{p,q}$ is strictly geometrically convex and log-convex on $(0,\infty)$ for $q>0.$ Consequently, for $\alpha\in(0,1)$ and $p_1,p_2,q>0$ such that $p_1\neq p_2$ we have
$$\pi_{p_1^{\alpha}p_2^{1-\alpha},q}<\pi_{p_1,q}^{\alpha}\pi_{p_2,q}^{1-\alpha}\ \ \ \ \mbox{and}\ \ \ \ \pi_{{\alpha}p_1+{(1-\alpha)}p_2,q}<\pi_{p_1,q}^{\alpha}\pi_{p_2,q}^{1-\alpha}.$$
The first inequality implies that we have also
$$\pi_{p_1^{\alpha}p_2^{1-\alpha},q}<{\alpha}\pi_{p_1,q}+(1-{\alpha})\pi_{p_2,q}.$$
Observe that for $\alpha=\frac{1}{2},$ $p_1=s-1$ and $p_2=s+1$ the second inequality reduces to the next Tur\'an type inequality for $s>1$ and $q>0$
$$\pi_{s,q}^2<\pi_{s-1,q}\pi_{s+1,q}.$$
These results extend the results from remark {\bf C}. We also mention that by means of Theorem \ref{thm2} the function $q\mapsto \pi_{p,q}$ is strictly log-convex on $(0,\infty)$ for $p>0$ and the next Tur\'an type inequality is valid for $s>1$ and $p>0$
$$\pi_{p,s}^2<\pi_{p,s-1}\pi_{p,s+1}.$$

\section{Lemmas and proofs of the main results}
\setcounter{equation}{0}

In this section our aim is to present the proofs of the main results together with the preliminary results which we use in the proofs.

\begin{lemma}\label{neu}\cite[Thm 2.1]{ne2} Let $f:(0,\infty)\to (0,\infty)$
be a differentiable, log-convex function and let $a\geq 1$. Then $g(x)=(f(x))^a/f(a\,x)$
 decreases on its domain. In particular, if $0\leq x\leq y\,,$ then the following inequalities
 $$\frac{(f(y))^a}{f(a\,y)}\leq\frac{(f(x))^a}{f(a\,x)}\leq (f(0))^{a-1}$$
 hold true. If $0<a\leq 1$, then the function $g$ is an increasing function on $(0,\infty)$
and inequalities are reversed.
\end{lemma}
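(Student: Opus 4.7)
The plan is to reduce the monotonicity of $g$ to the monotonicity of the logarithmic derivative of $f$. Since $f$ is positive, differentiable and log-convex, the differentiable criterion for log-convexity recalled in the introduction says that $t\mapsto f'(t)/f(t)$ is nondecreasing on $(0,\infty)$. First I would take the logarithm of $g(x)=f(x)^a/f(ax)$, differentiate and arrange the result as
$$\frac{g'(x)}{g(x)}=a\left(\frac{f'(x)}{f(x)}-\frac{f'(ax)}{f(ax)}\right),$$
so that the sign of $g'$ is entirely controlled by the position of $ax$ relative to $x$.

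Next, for $a\geq 1$ one has $ax\geq x$ for every $x>0$, and the monotonicity of $f'/f$ forces the bracket to be nonpositive; hence $g'(x)\leq 0$ and $g$ is nonincreasing on $(0,\infty)$. The first half of the two-sided inequality, namely $g(y)\leq g(x)$ for $0<x\leq y$, is then immediate, while letting $x\to 0^+$ yields the upper bound
$$g(x)\leq\lim_{t\to 0^+}\frac{f(t)^a}{f(at)}=f(0^+)^{a-1},$$
which is the intended reading of $(f(0))^{a-1}$. The regime $0<a\leq 1$ is handled symmetrically: now $ax\leq x$, the bracket is nonnegative, $g'(x)\geq 0$, and $g$ is nondecreasing on $(0,\infty)$, so the whole chain of inequalities reverses.

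I do not foresee any serious obstacle: the argument reduces to the elementary observation that a log-convex function has nondecreasing logarithmic derivative, together with evaluation of $g$ at the left endpoint. The only mildly delicate point is that $f$ is declared on $(0,\infty)$, so $f(0)$ in the displayed bound must be interpreted as the one-sided limit $\lim_{t\to 0^+}f(t)$; if this limit equals $+\infty$ the bound becomes vacuous, and otherwise the monotonicity of $g$ delivers it directly. Strict inequalities in applications would follow by replacing ``nondecreasing'' with ``strictly increasing'' for $f'/f$, which in turn corresponds to \emph{strict} log-convexity of $f$.
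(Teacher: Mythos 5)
Your argument is correct: taking logarithms gives $g'(x)/g(x)=a\bigl(f'(x)/f(x)-f'(ax)/f(ax)\bigr)$, and the monotonicity of the logarithmic derivative of a differentiable log-convex function (a criterion the paper itself recalls in the introduction) settles both regimes $a\geq 1$ and $0<a\leq 1$; your remark that $(f(0))^{a-1}$ must be read as the one-sided limit $\lim_{t\to 0^+}f(t)^{a-1}$ is an appropriate clarification. Note that the paper gives no proof of this lemma at all — it is quoted verbatim from Neuman \cite[Thm 2.1]{ne2} — so there is nothing in the text to compare against; your proof is the standard one for this result and fills that gap correctly.
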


\begin{lemma}\label{geo}
Let $b>a>0.$ If the positive function $\nu\mapsto K(\nu,t)$ is (strictly) geometrically convex on $[a,b]$ for $t\in[0,x]$
with $x>0$, then the function
$$\nu\mapsto F_{\nu}(x)=\int_0^xK(\nu,t)dt$$
is also (strictly) geometrically convex on $[a,b]$.
\end{lemma}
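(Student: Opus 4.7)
The plan is to combine the pointwise geometric convexity of $K(\nu,t)$ with H\"older's inequality applied to the integral. Recall that geometric convexity of $\nu\mapsto K(\nu,t)$ on $[a,b]$ means that for every $\nu,\mu\in[a,b]$, every $\lambda\in[0,1]$, and every $t\in[0,x]$, one has
$$K(\nu^\lambda\mu^{1-\lambda},t)\leq K(\nu,t)^\lambda K(\mu,t)^{1-\lambda}.$$
What we want to establish is the analogous inequality for $F_\nu(x)=\int_0^x K(\nu,t)\,dt$, namely
$$F_{\nu^\lambda\mu^{1-\lambda}}(x)\leq F_\nu(x)^\lambda F_\mu(x)^{1-\lambda}.$$

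First I would integrate the pointwise inequality over $t\in[0,x]$, obtaining
$$F_{\nu^\lambda\mu^{1-\lambda}}(x)\leq \int_0^x K(\nu,t)^\lambda K(\mu,t)^{1-\lambda}\,dt.$$
Then I would invoke H\"older's inequality with conjugate exponents $1/\lambda$ and $1/(1-\lambda)$, applied to the functions $t\mapsto K(\nu,t)^\lambda$ and $t\mapsto K(\mu,t)^{1-\lambda}$. This yields
$$\int_0^x K(\nu,t)^\lambda K(\mu,t)^{1-\lambda}\,dt\leq\left(\int_0^x K(\nu,t)\,dt\right)^\lambda\left(\int_0^x K(\mu,t)\,dt\right)^{1-\lambda}=F_\nu(x)^\lambda F_\mu(x)^{1-\lambda},$$
and chaining the two inequalities gives the desired conclusion.

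For the strict version, if $K$ is strictly geometrically convex in $\nu$, then the pointwise inequality is strict whenever $\nu\neq \mu$ and $\lambda\in(0,1)$; assuming enough regularity in $t$ (continuity or measurability with positivity, which is the setting of the paper since $K$ is positive), the first inequality already becomes strict after integration, so strict geometric convexity of $F_{\,\cdot\,}(x)$ follows. The only potential obstacle is the H\"older step when $\lambda\in\{0,1\}$, but these are the trivial endpoints where both sides of the asserted inequality reduce to $F_\nu(x)$ or $F_\mu(x)$, so they pose no real issue. Thus the argument is short: pointwise geometric convexity plus H\"older's inequality.
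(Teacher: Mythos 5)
Your proof is correct and follows essentially the same route as the paper: integrate the pointwise geometric convexity inequality over $t$ and then apply the H\"older--Rogers inequality with exponents $1/\lambda$ and $1/(1-\lambda)$ to obtain $F_{\nu^{\lambda}\mu^{1-\lambda}}(x)\leq F_{\nu}^{\lambda}(x)F_{\mu}^{1-\lambda}(x)$. Your added remark on how strictness propagates through the integration step is a small but welcome elaboration the paper leaves implicit.
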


\begin{proof}[\bf Proof of Lemma \ref{geo}]
The result follows immediately from the well-known H\"older-Rogers inequality for integrals. Namely, we have
\begin{align*}
F_{\nu^{\alpha}\mu^{1-\alpha}}(x)&=\int_0^x K\left(\nu^{\alpha}\mu^{1-\alpha},t\right)dt\leq \int_0^xK^{\alpha}(\nu,t)K^{1-\alpha}(\mu,t)dt\\&
\leq \left[\int_0^xK(\nu,t)dt\right]^{\alpha}\left[\int_0^xK(\mu,t)dt\right]^{1-\alpha}=F_{\nu}^{\alpha}(x)F_{\mu}^{1-\alpha}(x),
\end{align*}
where $\nu,\mu\in[a,b]$ and $\alpha\in[0,1].$
\end{proof}

\begin{proof}[\bf Proof of Theorem \ref{thm1}]
For the proof of part (1), let $t\in(0,x),\,x\in(0,1)$ be fixed. Let us consider the function $f:(0,\infty)\to \mathbb{R},$ defined by
$$f(p)=\log\left(1-t^p\right)^{-1/p}=-\frac{1}{p}\log\left(1-t^p\right).$$
Observe that for $p>0$ and $t\in(0,1)$
$$f'(p)=\frac{1}{p}\frac{t^p}{1-t^p}\log t+\frac{1}{p^2}\log\left(1-t^p\right)<0,$$
$$f''(p)=-\frac{2}{p^2}\frac{t^p}{1-t^p}\log t+\frac{t^p}{p}\left(\frac{\log t}{1-t^p}\right)^2-\frac{2}{p^3}\log\left(1-t^p\right)>0.$$
Consequently, the function $f$ is strictly decreasing and convex, which in turn implies that $p\mapsto \left(1-t^p\right)^{-1/p}$ is strictly decreasing and log-convex on $(0,\infty)$. In other words, the integrand of $\arcsin_px$ is strictly decreasing and log-convex on $(0,\infty)$.
Now, by using the fact that the integral preserves the monotonicity and log-convexity,
it follows that the function $p\mapsto \arcsin_px$ is strictly decreasing and log-convex
 on $(0,\infty)$. Now, observe that for $s(p)=e^{f(p)}$ we have
$$\frac{ps'(p)}{s(p)}=\frac{t^p}{1-t^p}\log t+\frac{1}{p}\log\left(1-t^p\right),$$
$$\left[\frac{ps'(p)}{s(p)}\right]'=-\frac{1}{p}\frac{t^p}{1-t^p}\log t+
t^p\left(\frac{\log t}{1-t^p}\right)^2-\frac{1}{p^2}\log\left(1-t^p\right)>0,$$
where $p>0$ and $t\in(0,1).$ This means that the integrand of $\arcsin_px$ is strictly geometrically convex on $(0,\infty)$. By
Lemma \ref{geo}, it follows that $p\mapsto \arcsin_px$ is strictly geometrically convex on $(0,\infty)$.

Now, for $t\in(0,1)$ fixed let us consider the function $g:(0,\infty)\to \mathbb{R},$ defined by
$$g(p)=\log\left(1-t^p\right)^{-1}=-\log\left(1-t^p\right).$$
We get
$$g'(p)=\frac{t^p}{1-t^p}\log t<0,\ \ g''(p)={t^p}\left(\frac{\log t}{1-t^p}\right)^2>0,$$
and consequently $p\mapsto \left(1-t^p\right)^{-1}$ is strictly decreasing and log-convex on $(0,\infty)$. By using again the fact that the integral preserves the monotonicity and log-convexity, it follows that the function $p\mapsto {\rm arctanh}_px$ is strictly decreasing and log-convex on $(0,\infty)$ for all $x\in(0,1)$ fixed.

Thus, we proved that the functions $p\mapsto \arcsin_p(x)$ and $p\mapsto {\rm arctanh}_p(x)$ are indeed strictly decreasing and log-convex on $(0,\infty)$. Now, let us focus on the complete monotonicity. Recall (see \cite{samko}) that the composition of a completely monotonic function with a function whose derivative is completely monotone is also completely monotonic. This implies that for $t\in(0,1)$ the function $p\mapsto g(p)=-\log(1-t^p)$ is completely monotonic on $(0,\infty)$ since $p\mapsto -\log p$ is completely monotonic on $(0,1)$ and $p\mapsto -t^p\log t$ is completely monotonic on $(0,\infty).$ On the other-hand it is known that the product of completely monotonic functions is also completely monotonic (see \cite{samko}), which in turn implies that $p\mapsto f(p)=\frac{1}{p}\cdot\left(-\log(1-t^p)\right)$ is completely monotonic on $(0,\infty)$ for $t\in(0,1).$ We note that the positive function $\varphi$ is said to be logarithmically completely monotonic if it satisfies $(-1)^m\left[\log \varphi(x)\right]^{(m)}\geq 0$ for all $x>0$ and $m\in\{0,1,2,\dots\}.$ We also note that every logarithmically completely monotonic function is completely monotonic, and each completely monotonic function is log-convex, see \cite{berg} and \cite[p. 167]{widder}. The above results imply that the functions $p\mapsto (1-t^p)^{-1}$ and $p\mapsto (1-t^p)^{-1/p}$ are logarithmically completely monotonic, and hence completely monotonic on $(0,\infty)$ for $t\in (0,1).$ These show that indeed the integrands of ${\rm arctanh}_p\, x$ and $\arcsin_p x$ are strictly completely monotonic and log-convex as functions of $p$ on $(0,\infty)$ for $t\in(0,1),$ and also for $t\in(0,x).$ Thus, by using the property that the integral preserves the complete monotonicity (see \cite{samko}), we proved that the functions $p\mapsto \arcsin_p(x)$ and $p\mapsto {\rm arctanh}_p(x)$ are indeed strictly completely monotonic and hence log-convex on $(0,\infty)$.

For the proof of part (2), let us consider the function $h:(0,\infty)\to \mathbb{R},$ defined by
$h(p)=(1+t^p)^{-1}$,
for fixed $t\in(0,1)$.
We have
$$h'(p)=-\frac{t^p}{(1+t^p)^2}\log t>0\quad  \mbox{and} \quad
h''(p)=-\frac{t^p(1-t^p)}{(1+t^p)^3}(\log t)^2<0,$$
and consequently $h$ is strictly increasing and concave. Consequently, the function $p\mapsto \arctan_p(x)$ is strictly increasing on $(0,\infty)$ for all $x\in(0,1)$ fixed. Moreover for $t\in(0,1),$ $\alpha\in(0,1),$ $p,q>0$ such that $p\neq q$ we have
$$h(\alpha p+(1-\alpha)q)>\alpha h(p)+(1-\alpha)h(q),$$
and hence
\begin{align*}\arctan_{\alpha p+(1-\alpha)q}&(x)=\int_0^xh(\alpha p+(1-\alpha)q)dt\\&>\alpha \int_0^xh(p)dt+(1-\alpha)\int_0^xh(q)dt=\alpha \arctan_p(x)+(1-\alpha) \arctan_q(x),\end{align*}
which means that $p\mapsto \arctan_p(x)$ is strictly concave on $(0,\infty)$ for all $x\in(0,1)$ fixed. Now, since the concavity is stronger than the log-concavity, it follows that $p\mapsto \arctan_p(x)$ is strictly log-concave on $(0,\infty)$. This completes the proof.
\end{proof}

\begin{proof}[\bf Proof of Theorem \ref{thm2}] We consider the two-variable functions $f,g:(0,\infty)^2\to(0,\infty),$ defined
$$f(p,q)=(1-t^q)^{-1/p},\ \ g(p,q)=(1+t^q)^{-1/p},$$
where $t\in(0,1).$ Since $p\mapsto 1/p$ is completely monotonic on $(0,\infty),$ the function $p\mapsto \log f(p,q)=-\frac{1}{p}\log(1-t^q)$ for $q>0$ and $t\in(0,1)$ is completely monotonic on $(0,\infty),$ and consequently the function $p\mapsto f(p,q)$ for $q>0$ and $t\in(0,1)$ is also completely monotonic on $(0,\infty).$ This implies that the function $p\mapsto \arcsin_{p,q}(x)$ is completely monotonic, and hence log-convex on $(0,\infty)$ for $q>0$ and $x\in(0,1).$ According to the proof of Theorem \ref{thm1}, the function $q\mapsto \log f(p,q)$ for $p>0$ and $t\in(0,1)$ is completely monotonic on $(0,\infty),$ and consequently the function $q\mapsto f(p,q)$ for $p>0$ and $t\in(0,1)$ is also completely monotonic on $(0,\infty).$ This implies that the function $q\mapsto \arcsin_{p,q}(x)$ is completely monotonic, and hence log-convex on $(0,\infty)$ for $p>0$ and $x\in(0,1).$

Since for $q>0$ and $t\in(0,1)$ the function $$p\mapsto \frac{p\frac{\partial f(p,q)}{\partial p}}{f(p,q)}=\frac{1}{p}\log(1-t^q)$$
is strictly increasing on $(0,\infty),$ by Lemma \ref{geo} we obtain that $p\mapsto \arcsin_{p,q}(x)$ is strictly geometrically convex on $(0,\infty)$ for $q>0$ and $x\in(0,1).$

On the other hand, for $t\in(0,1)$ we have
$$\frac{\partial g(p,q)}{\partial p}=\frac{1}{p^2}(1+t^q)^{-1/p}\log(1+t^q)>0,\ \ \mbox{if}\ \ p,q>0,$$
$$\frac{\partial^2 g(p,q)}{\partial p^2}=\frac{1}{p^3}(1+t^q)^{-1/p}\left(\log(1+t^q)-2p\right)\log(1+t^q)<0,\ \ \mbox{if}\ \ p>\log\sqrt{2},q>0,$$
$$\frac{\partial g(p,q)}{\partial q}=-\frac{1}{p}(1+t^q)^{-\frac{1}{p}-1}t^q\log t>0,\ \ \mbox{if}\ \ p,q>0,$$
$$\frac{\partial^2 g(p,q)}{\partial q^2}=-\frac{1}{p^2}(1+t^q)^{-\frac{1}{p}-2}t^q(p-t^q)(\log t)^2<0,\ \ \mbox{if}\ \ p>1,q>0.$$
Consequently the integrand of ${\rm arcsinh}_{p,q}(x)$ is strictly decreasing on $(0,\infty)$ with respect to $p,$ and also with respect to $q,$ when $p,q>0.$ Moreover, the integrand of ${\rm arcsinh}_{p,q}(x)$ is strictly concave with respect to $p$ on $(\log\sqrt{2},\infty)$ for $q>0$ and $t\in(0,1);$ and is strictly concave with respect to $q$ on $(0,\infty)$ for $p>1$ and $t\in(0,1).$ Since the integral preserves the monotonicity and concavity, it follows that $p\mapsto {\rm arcsinh}_{p,q}(x)$ is strictly decreasing on $(0,\infty)$ and concave on $(\log\sqrt{2},\infty)$ for $q>0$ and $x\in(0,1);$ and $q\mapsto {\rm arcsinh}_{p,q}(x)$ is strictly decreasing and concave on $(0,\infty)$ for $x\in(0,1),$ $p>0,$ and $p>1$ respectively. Finally, since the concavity implies the log-concavity, the proof of this theorem is complete.
\end{proof}


\end{document}